\pdfoutput=1
\RequirePackage{fix-cm}
\documentclass[12pt,letterpaper,onecolumn]{article}

\usepackage[left=1.25in,right=1.25in,top=1in,bottom=1in]{geometry}
\usepackage{amsmath,amssymb}
\usepackage{amsthm}
\usepackage{graphicx}
\usepackage{cite}
\usepackage[fontsize=12.5pt]{fontsize}
\usepackage{setspace}
\usepackage{placeins}
\usepackage{needspace}

\newcommand{\R}{\mathbb{R}}
\newcommand{\E}{\mathbb{E}}
\newcommand{\Prob}{\mathbb{P}}
\newcommand{\btheta}{\boldsymbol{\theta}}
\newcommand{\thh}{\hat{\btheta}}
\newcommand{\Bmat}{\mathbf{B}}
\newcommand{\ybar}{\bar{y}}
\newcommand{\epb}{\bar{\epsilon}}

\newtheoremstyle{romanbody}
  {\topsep}{\topsep}{\normalfont}{}
  {\itshape}{:}{0.5em}{}
\theoremstyle{definition}
\newtheorem{definition}{Definition}
\theoremstyle{romanbody}
\newtheorem{theorem}{Theorem}
\newcounter{assumption}
\renewcommand{\theassumption}{A\arabic{assumption}}
\newenvironment{standingassumptions}{
  \begin{list}{\textbf{\theassumption}}{
    \usecounter{assumption}
    \settowidth{\labelwidth}{\textbf{A3}}
    \setlength{\labelsep}{0.6em}
    \setlength{\leftmargin}{\labelwidth}
    \addtolength{\leftmargin}{\labelsep}
    \setlength{\itemsep}{0pt}
    \setlength{\parsep}{0pt}
  }
}{\end{list}}

\title{Blind Random Search with Noisy Loss Measurements:\\
Averaging, Thresholding, and Almost Sure Convergence}

\author{Zixian Zhou and Xintong Jiang}
\date{}

\begin{document}

\maketitle
\pagestyle{plain}

\begin{abstract}
Blind random search repeatedly draws a candidate point and replaces the
current estimate whenever the candidate has a lower loss. In the
absence of noise, the true loss is observed directly. It decreases
strictly at every accepted update and is monotone nonincreasing over all
iterations. Measurement noise can make a worse candidate appear better
and thereby break this monotonicity. To
recover almost sure convergence under noise, we incorporate averaging and
thresholding into the original decision criterion. These two classical
tools are coupled. As the sample sizes grow, the positive threshold
shrinks at a matched rate. These
modifications allow blind random search to recover eventual monotonicity
of the true loss under noisy measurements and to converge almost surely.
\end{abstract}

\section{Introduction}

In many practical optimization problems, direct gradient measurements are not
naturally available, while measurements of the objective function are generally
noisy. Obtaining gradients from system outputs can also require detailed
knowledge of the relationships between system inputs and outputs, whereas
methods based on function measurements can avoid such a detailed model
\cite[pp.~150--153]{spall2003}\cite{spall1994}. These methods appear both in
practical applications and in broader black box optimization procedures. For
example, Chin et al. considered traffic signal control \cite{chin1999}, while
Regis considered deterministic constrained black box optimization
\cite{regis2022}. Among methods based on function measurements, this paper
focuses on blind random search. The method independently samples candidate
points over $\Theta$ without adapting its sampling strategy to information
obtained earlier in the search \cite[pp.~37--38]{spall2003}. This
makes blind random search useful when little exploitable information about the
objective is available \cite[pp.~19--21]{solis1981}.

In the absence of noise, blind random search
repeatedly samples candidates throughout the domain and accepts one only when
its loss is below the current loss \cite{spall2003}. Consequently, the sequence
of loss values at the current estimates, $\{L(\thh_k)\}$, is nonincreasing.
In practice, loss measurements are often corrupted by noise
\cite{spall1994}. We write an observation as
$y(\btheta)=L(\btheta)+\epsilon$, where $\epsilon$ denotes measurement noise
and $\E[\epsilon]=0$.
Under noise, an apparent improvement $y(\thh_{k+1})<y(\thh_k)$ may conceal a
true deterioration
$L(\thh_{k+1})>L(\thh_k)$ \cite{alexander2005}. The algorithm can therefore
accept a candidate with higher true loss, and the monotonicity of the noiseless
search is lost.

Averaging and thresholding are two common mechanisms for mitigating noise in
comparison based search \cite{spall2003}. Averaging reduces the variance of
loss estimates, whereas thresholding reduces false acceptances caused by noise
by requiring the observed improvement to exceed a positive margin
\cite{spall2003}; convergence has been established for several methods using
one or both mechanisms \cite{devroye1976,alexander2005,devroye2002}. To ensure
reliable comparisons, these methods remeasure the loss at the current estimate
for every comparison, so that the observations used in each comparison are
independent of the past selection history \cite{devroye1976}. Such
remeasurement, however, increases measurement cost. Devroye considered
discarding data after a comparison wasteful \cite[p.~706]{devroye1976b} and
worked with two methods that store historical search points and their associated
observations \cite{devroye1976b,devroye2002}.

We share Devroye's view that observations are valuable. Our algorithm combines
increasing averaging with a vanishing positive threshold and reuses accumulated
measurements at the current estimate. Consequently, the observations used in a
comparison are no longer independent of the past selection history. Controlling
that bias is a central contribution of the proof. We show that the true loss is
eventually nonincreasing and that the search converges almost surely to the
global minimizer.

\section{The Algorithm}
\label{sec:algorithm}

Consider the problem of minimizing a loss function
$L(\btheta)$ over a domain
$\Theta \subseteq \R^p$, where only noisy measurements
$y(\btheta) = L(\btheta) + \epsilon$ are available. The following
modification of blind random search \cite{spall2003} combines the
increasing amount of averaging with the dynamic threshold.

\begin{enumerate}
\item \begingroup\emergencystretch=.75em
      \textbf{Initialization:} Fix an integer $k_0 \ge 1$ and select an initial estimate $\thh_0 \in \Theta$. Compute the
      initial averaged loss over $k_0$ observations,
      $\ybar(\thh_0) = \frac{1}{k_0}\sum_{i=1}^{k_0} y_i(\thh_0)$,
      and set $k = k_0$ with $\thh_{k_0} := \thh_0$ and
      $\ybar(\thh_{k_0}) := \ybar(\thh_0)$ (so the search step below
      is defined at $k = k_0$).\par\endgroup
\item \textbf{Search step (iteration $k$):}
  \begin{itemize}
    \item Randomly generate a candidate point
          $\btheta_{\text{new}}(k+1) \in \Theta$. From $k+1$ fresh
          observations, compute the averaged loss value at the candidate
          point,
          \[
            \ybar(\btheta_{\text{new}}(k+1))
              = \frac{1}{k+1}\sum_{i=1}^{k+1}
                    y_i(\btheta_{\text{new}}(k+1)).
          \]
    \item Accept the candidate if
          \begin{equation}
            \ybar(\btheta_{\text{new}}(k+1)) < \ybar(\thh_k) - \tau_k
            \label{eq:threshold}
          \end{equation}
          where the dynamic threshold is
          \[
            \tau_k := c\sigma\sqrt{\frac{\log(k+1)}{k}},
            \qquad c>2\sqrt{2},
          \]
          where $\sigma$ is the known subgaussian scale parameter
          formalized in Assumption~\ref{ass:noise}.
    \item \emph{If accepted:} set
          $\thh_{k+1} = \btheta_{\text{new}}(k+1)$ and
          $\ybar(\thh_{k+1}) = \ybar(\btheta_{\text{new}}(k+1))$.
    \item \emph{If rejected:} set $\thh_{k+1} = \thh_k$.
          \par\noindent
          Draw one additional fresh observation $y_{k+1}(\thh_k)$
          and update the running average,
          \[
            \ybar(\thh_{k+1})
              = \frac{k\,\ybar(\thh_k) + y_{k+1}(\thh_k)}{k+1}.
          \]
    \item At the end of iteration $k$, $\ybar(\thh_{k+1})$ is the
          arithmetic mean of exactly $k+1$ observations at
          $\thh_{k+1}$. Increase $k$ by one and return to the search
          step.
  \end{itemize}
\item \textbf{Termination:} Stop when a predefined criterion is met,
      such as a maximum number of noisy loss measurements, or when the user
      is otherwise satisfied with the current estimate.
\end{enumerate}

\noindent\emph{Relation to prior search rules under noise.}\par
\noindent Devroye's statistical search decision rule recomputes independent
sample means at the current estimate and the candidate point in each noisy
comparison; earlier measurements at the current estimate are not pooled into
the loss estimate used in the next comparison
\cite[p.~47]{devroye1976}. ISSARS \cite[Sec.~3.2]{alexander2005}
compares new sample means at both points. Theorems~5 and~6 of
Devroye and Krzy\.{z}ak
\cite[Thms.~5--6]{devroye2002} instead use $Z_n$ formed from two new
empirical distribution functions. Equation~\eqref{eq:threshold}
compares the averaged loss value at the candidate point with the pooled
loss estimate at the current estimate. Devroye's learning-memory method may
store historical search points and their associated observation summaries
\cite[Sec.~III]{devroye1976b}, whereas the nonsequential strategy analyzed by
Devroye and Krzy\.{z}ak retains every sampled point and repeatedly augments its
observations \cite[Sec.~10]{devroye2002}. Our method instead stores only the
current estimate and its pooled loss estimate, adds one new observation after
every rejection, and couples growing sample sizes to a vanishing threshold,
creating the selection bias studied in Section~\ref{sec:analysis}.

\section{Convergence Analysis}
\label{sec:analysis}
Let $L(\btheta)$ be the loss function and $k$ the iteration index of
the algorithm of Section~\ref{sec:algorithm}. We first recall the
subgaussian noise model used in the convergence analysis.

\begin{definition}[Subgaussian random variable]
A random variable $X$ with mean zero is subgaussian with variance proxy
$\sigma^2$ if
\[
  \E[\exp(\lambda X)]
    \le \exp\bigl(\lambda^2\sigma^2/2\bigr),
  \qquad \lambda\in\R.
\]
\end{definition}
This condition implies
$\Prob(|X|>t)\le 2\exp(-t^2/(2\sigma^2))$ for $t>0$
\cite[Def.~2.2 and Eq.~(2.9)]{wainwright2019}.
It also implies $\operatorname{Var}(X)\le\sigma^2$; the variance proxy
need not equal the actual variance. Examples include centered Gaussian, centered
Bernoulli, and scaled Rademacher distributions, as well as bounded
distributions with mean zero. For a Student's $t$ distribution with finite
degrees of freedom, the tails decrease only as powers of $t$. Such a
distribution cannot satisfy the preceding subgaussian tail bound and is
therefore not subgaussian.

The convergence result is established under the following standing
assumptions.
\begin{standingassumptions}
\item\label{ass:separation} \emph{(Separation of the minimizer).}
      $\btheta^*$ is a minimizer of
      $L$ on $\Theta$ and, for every $\eta > 0$,
      \[
        \inf_{\btheta \in \Theta,\; \|\btheta-\btheta^*\| \ge \eta}
          L(\btheta) \;>\; L(\btheta^*).
      \]
      This condition implies that $\btheta^*$ is the unique minimizer.
\item\label{ass:noise} \emph{(Noise).}
      Each observation has the form
      $y(\btheta) = L(\btheta) + \epsilon$, where all noise variables
      drawn by the algorithm, across all iterations and across both
      the current estimate's and the candidate's observations, are
      mutually independent and identically distributed, have mean
      zero, and are subgaussian with a common variance proxy $\sigma^2$.
\item\label{ass:reachability} \emph{(Reachability).}
      For every $\eta > 0$ and every $k \ge k_0$,
      \[
        \Prob\bigl(
            L(\btheta_{\text{new}}(k+1)) < L(\btheta^*) + \eta
          \bigr) \;\ge\; \delta(\eta)
      \]
      for some $\delta(\eta) > 0$ independent of $k$. Candidate draws
      are independent across iterations and independent of the
      measurement noise, and their sampling distributions are fixed
      in advance as functions of $k$.
\end{standingassumptions}

\begin{theorem}
\label{thm:main}
Suppose that Assumptions~\ref{ass:separation}--\ref{ass:reachability}
hold. Then the sequence $\{\thh_k\}$ generated by the algorithm satisfies
$L(\thh_k) \to L(\btheta^*)$, $\ybar(\thh_k) \to L(\btheta^*)$, and
$\thh_k \to \btheta^*$, all almost surely as $k \to \infty$.
\end{theorem}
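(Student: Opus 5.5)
The proof runs through a single good event on which every noise average the algorithm could ever use is uniformly close to its mean. The key observation is that the selection bias created by pooling measurements at the current estimate can be sidestepped. Whatever the selection history, the noise part of $\ybar(\thh_k)$ is always a running average of one fixed stream of i.i.d.\ noise variables, namely the stream attached to the point accepted at some iteration $j$. So I would index all noise by its stream and bound the averages uniformly over streams and lengths, instead of conditioning on acceptance events.

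\textbf{Step 1 (uniform concentration).} For each iteration $j\ge k_0$, the candidate drawn at iteration $j$ carries its own stream $\epsilon^{(j)}_1,\epsilon^{(j)}_2,\dots$ (the first $j+1$ terms come from the acceptance test, later terms from rejections). These streams are mutually independent by Assumption~\ref{ass:noise}. The initial point gets a stream too. Let $\epb^{(j)}_m$ be the average of the first $m$ terms. Subgaussianity gives
\[
\Prob\bigl(|\epb^{(j)}_m|>t_m\bigr)\le 2\exp\bigl(-m t_m^2/(2\sigma^2)\bigr).
\]
For the candidate test at iteration $k$, only $m=k+1$ with $j=k$ matters, so a single union bound over $k$ suffices. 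For the incumbent, $m$ ranges over $m\ge j+1$ for each $j$, which is a double sum. Choosing $t_m=a\sigma\sqrt{\log(m)/m}$, the double sum $\sum_j\sum_{m\ge j+1} m^{-a^2/2}$ converges iff $a^2/2>2$, i.e.\ $a>2$. The condition $c>2\sqrt2$ leaves room to take $a=c/\sqrt2>2$ for both the candidate and the incumbent deviations. I would then check that $t_{k+1}+t_{k}\le\tau_k$ up to the harmless $\log(k+1)$ versus $\log k$ discrepancy, possibly using a slightly smaller $a$ with $2<a<c/2\cdot\sqrt2$. By Borel--Cantelli, almost surely there is a random $K$ such that for all $k\ge K$ and all streams in play, $|\text{noise in }\ybar(\thh_k)|\le t_k$ and $|\text{noise in }\ybar(\btheta_{\text{new}}(k+1))|\le t_{k+1}$, with $t_k+t_{k+1}\le\tau_k$. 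I expect this step to be the main obstacle: getting the double union bound summable while absorbing the $\log(k+1)/k$ versus $\log m/m$ bookkeeping into the constant $c$.

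\textbf{Step 2 (eventual monotonicity).} On the good event with $k\ge K$, acceptance implies
\[
L(\btheta_{\text{new}})\le \ybar(\btheta_{\text{new}})+t_{k+1}<\ybar(\thh_k)-\tau_k+t_{k+1}\le L(\thh_k)+t_k+t_{k+1}-\tau_k\le L(\thh_k).
\]
So $L(\thh_k)$ is eventually nonincreasing and converges to some limit $L_\infty\ge L(\btheta^*)$.

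\textbf{Step 3 (limit is the minimum).} Suppose $L_\infty>L(\btheta^*)+2\eta$ for some $\eta>0$. Whenever $k\ge K$ and a candidate has $L(\btheta_{\text{new}})<L(\btheta^*)+\eta$, the same bounds give
\[
\ybar(\btheta_{\text{new}})<L(\btheta^*)+\eta+t_{k+1}\le L(\thh_k)-\eta+t_{k+1}\le \ybar(\thh_k)-\eta+t_{k+1}+t_k<\ybar(\thh_k)-\tau_k
\]
once $2\tau_k<\eta$. Such a candidate is therefore accepted, which would drop $L$ below $L_\infty$, a contradiction. By Assumption~\ref{ass:reachability} and the independence of the draws (second Borel--Cantelli lemma), such candidates occur infinitely often almost surely. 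Taking $\eta=1/n$ over countably many $n$ gives $L(\thh_k)\to L(\btheta^*)$ almost surely.

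\textbf{Step 4 (conclusion).} Since the noise part of $\ybar(\thh_k)$ is at most $t_k\to0$ for $k\ge K$, we get $\ybar(\thh_k)\to L(\btheta^*)$. Finally, if $\|\thh_k-\btheta^*\|\ge\eta$ infinitely often, Assumption~\ref{ass:separation} would keep $L(\thh_k)$ bounded away from $L(\btheta^*)$ along that subsequence, contradicting Step 3. Hence $\thh_k\to\btheta^*$ almost surely.
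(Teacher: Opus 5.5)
\textbf{Gap: the constant in Step~1 fails for part of the allowed range of $c$.}

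Your overall structure follows the paper's: a uniform bound over every history the algorithm could select, then Borel--Cantelli, eventual monotonicity, recurrence of good candidates, and a contradiction. Indexing the incumbent noise by per-candidate streams is a legitimate alternative to the paper's $A_k(n)$ decomposition. The problem is the last claim of Step~1, which you flagged but resolved incorrectly.

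Summability of the incumbent double sum $\sum_j\sum_{m\ge j+1}m^{-a^2/2}$ forces $a>2$. At the same time,
\[
  \frac{t_k+t_{k+1}}{\tau_k}\;\longrightarrow\;\frac{2a}{c},
\]
so $t_k+t_{k+1}\le\tau_k$ requires $a\le c/2$, which together with $a>2$ needs $c>4$.

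\begin{itemize}
\item With your suggested $a=c/\sqrt2$ you get $t_k+t_{k+1}\approx\sqrt2\,\tau_k$. The overshoot is a constant factor, not a $\log(k+1)$ versus $\log k$ discrepancy.
\item Shrinking $a$ inside $(2,c/\sqrt2)$ does not help.
\item An asymmetric split also falls short. A candidate multiplier $b>\sqrt2$ suffices for the single sum, and the incumbent still needs $a>2$. You then need $a+b\le c$, i.e.\ $c>2+\sqrt2$.
\end{itemize}

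So for $c\in(2\sqrt2,\,2+\sqrt2]$, which the algorithm permits, Step~2 (eventual monotonicity) is not established. Step~3 fails with it, because it uses monotonicity to define $L_\infty$.

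\textbf{Cause and fix.} The loss comes from the triangle inequality $|D_k|\le|\epb^{\mathrm{new}}_k|+|\epb^{\mathrm{cur}}_k|$, which adds standard deviations. The two averages are independent, so their variance proxies should add instead. The fix, which is what the paper does in Parts~III--IV, is to bound the difference directly:
\begin{itemize}
\item For each admissible incumbent stream $j$ (the initial stream or some $k_0\le j\le k-1$), $\epb^{(k)}_{k+1}-\epb^{(j)}_k$ is a difference of independent averages.
\item Its variance proxy is at most $\sigma^2/(k+1)+\sigma^2/k\le2\sigma^2/k$, so its tail beyond $\tau_k$ is at most $2(k+1)^{-c^2/4}$.
\item A union bound over at most $k$ streams gives $2k(k+1)^{-c^2/4}$, which is summable exactly when $c>2\sqrt2$.
\end{itemize}
Your multiplier $c/\sqrt2>2$ is exactly right for this joint difference, measured on its own scale $\sqrt2\,\sigma/\sqrt k$.

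Once $|D_k|\le\tau_k$ holds eventually, Steps~2 and~3 go through with it replacing $t_k+t_{k+1}\le\tau_k$. For Step~4, your separate incumbent bound with any fixed $a>2$ still gives $\epb^{\mathrm{cur}}_k\to0$, since there the constant no longer matters.
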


\begin{proof}
\renewcommand{\qedsymbol}{}
The proof has eight parts. Parts~I--IV control the comparison noise
uniformly and show that the current noise average converges to zero,
although the observations entering that average depend on earlier
acceptance decisions. Part~V establishes eventual monotonicity of the
true loss; Parts~VI and~VII show that candidates near the optimum recur
and are eventually accepted; Part~VIII adapts the sample path argument
of \cite[Thm.~2.1]{spall2003}. All statements below hold almost surely
unless otherwise noted.

\subsubsection*{Part I: Comparison noise and selection bias}
Under the cumulative averaging rule, at iteration $k$ the
average $\ybar(\thh_k)$ is the arithmetic mean of the $k$ observations
in the accumulated sample. Write
\begin{gather*}
  \ybar(\thh_k) = L(\thh_k) + \epb_k^{\mathrm{cur}}, \\
  \ybar(\btheta_{\mathrm{new}}(k+1))
    = L(\btheta_{\mathrm{new}}(k+1)) + \epb_k^{\mathrm{new}},
\end{gather*}
{\widowpenalty=10000
where $\epb_k^{\mathrm{cur}}$ is the average noise in the $k$
observations used to compute $\ybar(\thh_k)$, whereas
$\epb_k^{\mathrm{new}}$ is the average noise in the $k+1$ new
observations used to compute
$\ybar(\btheta_{\mathrm{new}}(k+1))$. Define the difference between the
two noise averages and the increment in the true loss by\par}
\[
  D_k := \epb_k^{\mathrm{new}}-\epb_k^{\mathrm{cur}},
  \qquad
  \Delta_k := L(\btheta_{\mathrm{new}}(k+1))-L(\thh_k).
\]
The acceptance test is equivalently
\[
  \Delta_k+D_k< -\tau_k.
\]

Under Assumption~\ref{ass:noise}, each newly drawn measurement error is
independent, identically distributed, and has mean zero. Conditional on
the search history, $\epb_k^{\mathrm{new}}$ is therefore centered and
subgaussian with variance proxy $\sigma^2/(k+1)$ because it averages
$k+1$ new measurements \cite[p.~24]{wainwright2019}. The pooled average
$\epb_k^{\mathrm{cur}}$ does not follow the centered distribution of a
fresh independent average: it includes measurements from the most
recently accepted point and is reused rather than remeasured at every
comparison.

The acceptance rule is
\[
  \text{accept}
  \quad\Longleftrightarrow\quad
  \Delta_k+\epb_k^{\mathrm{new}}-\epb_k^{\mathrm{cur}}< -\tau_k.
\]
The decision therefore combines a change in the true loss with the observed
noise: a smaller average of the candidate measurement errors makes acceptance
more likely. Before the decision, $\E[\epb_k^{\mathrm{new}}]=0$. Whenever
$0<\Prob(\text{accept})<1$, conditioning on acceptance gives
\[
  \E[\epb_k^{\mathrm{new}}\mid\text{accept}]<0.
\]
Thus, an accepted candidate average has negative conditional mean even though
every underlying error has mean zero. Upon acceptance,
$\epb_{k+1}^{\mathrm{cur}}=\epb_k^{\mathrm{new}}$, and the $k+1$ candidate
measurements initialize the average at the new current estimate. Hence the
resulting current noise average is not centered and does not follow the
distribution of an ordinary average of independent measurement errors.

Because of this selection bias, we study $\epb_k^{\mathrm{cur}}$ by
separating the sample present when the current estimate was established
from the measurements added after subsequent rejections, as illustrated
in Fig.~\ref{fig:sample-flow}.

\begin{figure}[!t]
\centering
\includegraphics[width=0.86\linewidth]
  {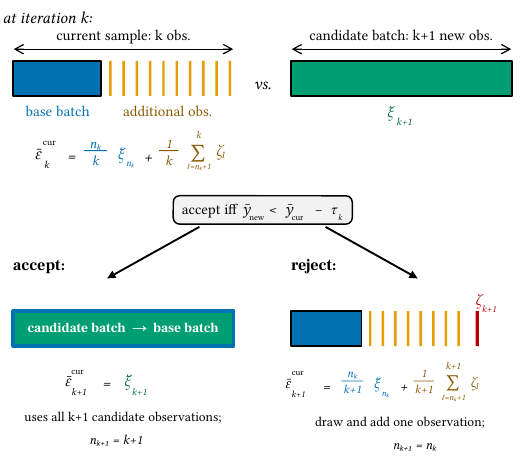}
\caption{Decomposition at iteration $k$: accumulated measurements at
the current estimate (blue and yellow), new candidate measurements
(green), and the measurement added after a rejection (red).}
\label{fig:sample-flow}
\end{figure}

\subsubsection*{Part II: Realized sample decomposition}
We now represent $\epb_k^{\mathrm{cur}}$ by tracing how the algorithm
forms the accumulated sample. Let $n_k$ be the sample size when the
current estimate was established. Before the first acceptance,
$n_k=k_0$. If the most recent acceptance occurred at iteration $j$,
then $n_k=j+1$.

Let $\{\xi_{k_0,i}\}_{i=1}^{k_0}$ be the initialization noises. For
$n\geq k_0+1$, let $\xi_{n,i}$ be the noise in the $i$th of the $n$
measurements taken at the candidate point in iteration $n-1$, and set
$\bar\xi_n=n^{-1}\sum_{i=1}^{n}\xi_{n,i}$. After the first acceptance,
$\bar\xi_{n_k}$ is the average noise from the most recently accepted
candidate. If the candidate at iteration $\ell-1$ is rejected, let
$\zeta_\ell$ be the noise in the additional loss measurement drawn at
the current estimate. With this notation,
\begin{equation}
  \epb_k^{\mathrm{cur}}
    = \frac{n_k}{k}\,\bar\xi_{n_k}
    + \frac{1}{k}\sum_{\ell=n_k+1}^{k}\zeta_\ell.
  \label{eq:cur-decomp}
\end{equation}
The first term represents the measurements used to establish the
current estimate: the initialization measurements before the first
acceptance, or the measurements at the most recently accepted candidate
thereafter. Their average is $\bar\xi_{n_k}$, and their weight in the
current average of $k$ measurements is $n_k/k$. The second term
represents one additional measurement at the current estimate for each
later rejection. There are $k-n_k$ such noise terms, each with weight
$1/k$. The new candidate average is
$\epb_k^{\mathrm{new}}=\bar\xi_{k+1}$.

Figure~\ref{fig:sample-flow} illustrates the same construction. Blue
denotes these $n_k$ measurements, and yellow denotes the $k-n_k$
measurements added after later rejections. Green denotes the $k+1$ new
measurements at the candidate point. If the candidate is accepted, the
green measurements form the accumulated sample at iteration $k+1$, so
$\epb_{k+1}^{\mathrm{cur}}=\epb_k^{\mathrm{new}}$ and
$n_{k+1}=k+1$. If the candidate is rejected, the blue and yellow
measurements remain in the accumulated sample, and one new measurement
is taken at $\thh_k$. Its noise is the red $\zeta_{k+1}$, and
$n_{k+1}=n_k$.

All $\xi_{n,i}$ and $\zeta_\ell$ are i.i.d.\ subgaussian variables
with mean zero and variance proxy $\sigma^2$; the notation distinguishes
noise in the candidate measurements from noise in the additional
measurements.

\subsubsection*{Part III: Comparison bound for a deterministic history}
This decomposition reduces the distributional analysis of
$\epb_k^{\mathrm{cur}}$ to a family of averages indexed by deterministic
$n$. For each $k_0\le n\le k$, define the possible current noise average
\begin{equation}
  A_k(n)
    := \frac{1}{k}\left(
         \sum_{i=1}^{n}\xi_{n,i}
         + \sum_{\ell=n+1}^{k}\zeta_\ell
       \right),
  \label{eq:possible-current}
\end{equation}
where the second sum is zero when $n=k$. Equation~\eqref{eq:cur-decomp}
then gives
\[
  \epb_k^{\mathrm{cur}}=A_k(n_k),
  \qquad
  \epb_k^{\mathrm{new}}=\bar\xi_{k+1}.
\]
For fixed $(n,k)$, $A_k(n)$ is the average of $k$ independent noises
and is subgaussian with variance proxy $\sigma^2/k$. Because acceptance
decisions make $n_k$ random, we first control $A_k(n)$ uniformly over
deterministic $n$ and then substitute $n_k$. The proof of Theorem~1 in
\cite[pp.~709--710]{devroye1976b} uses a related argument: Lemma~3
bounds all later sample sizes before the result is applied to points
generated by the search.

This decomposition is used in Parts~III and~IV to establish the
following two claims. Almost surely,
\[
  \begin{gathered}
    |D_k|
    =\bigl|\epb_k^{\mathrm{new}}-\epb_k^{\mathrm{cur}}\bigr|
    \le\tau_k
    \quad\text{for all sufficiently large }k,\\
    \epb_k^{\mathrm{cur}}\to0.
  \end{gathered}
\]

\emph{Uniform control of the comparison noise.}
For each deterministic $k_0\le n\le k$, define
\[
  D_k(n):=\bar\xi_{k+1}-A_k(n).
\]
The fresh errors
$\xi_{k+1,1},\ldots,\allowbreak\xi_{k+1,k+1}$ are independent of the earlier
errors $\xi_{n,1},\ldots,\allowbreak\xi_{n,n},\allowbreak
\zeta_{n+1},\ldots,\allowbreak\zeta_k$ that form
$A_k(n)$. Consequently, $\bar\xi_{k+1}$ is independent of $A_k(n)$.
Hence, for every $\lambda\in\R$,
\begin{align*}
  \E\!\left[\exp\{\lambda D_k(n)\}\right]
  &= \E\!\left[\exp\{\lambda\bar\xi_{k+1}\}\right]
     \E\!\left[\exp\{-\lambda A_k(n)\}\right] \\
  &\le
     \exp\left(\frac{\lambda^2\sigma^2}{2(k+1)}\right)
     \exp\left(\frac{\lambda^2\sigma^2}{2k}\right) \\
  &\le \exp\left(\frac{\lambda^2\sigma^2}{k}\right).
\end{align*}
{\widowpenalty=10000
Thus $D_k(n)$ is subgaussian with variance proxy $2\sigma^2/k$.
Applying the tail bound to both sides gives\par}
\begin{align*}
  \Prob\bigl(|D_k(n)|>\tau_k\bigr)
  &\le 2\exp\left(-\frac{k\tau_k^2}{4\sigma^2}\right) \\
  &= 2(k+1)^{-c^2/4}.
\end{align*}

\subsubsection*{Part IV: Uniform control of noise with probability one}
There are at most $k$ possible values of $n$ at iteration $k$.
Consequently,
\[
  \Prob\bigl(\exists\,k_0\le n\le k:
       |D_k(n)|>\tau_k\bigr)
  \le 2k(k+1)^{-c^2/4}.
\]
Because $c>2\sqrt{2}$, we have $c^2/4>2$, so the bound on the right is
summable in $k$. The first Borel--Cantelli lemma
\mbox{\cite[p.~188]{feller1957}}
implies that only finitely many of these events occur almost surely, so
their indices form a finite exceptional set.
\begin{samepage}
On this event of probability one, let $J_0(\omega)$ be the first index
beyond this set (or $k_0$ if the set is empty). After substituting the
value $n_k$ selected by the algorithm, the following holds for every
$k\ge J_0(\omega)$:
\begin{equation}
  |D_k|=|D_k(n_k)|\le\tau_k.
  \label{eq:comparison-final}
\end{equation}
\end{samepage}

\emph{Convergence of the current noise average.}
The fresh candidate average $\epb_k^{\mathrm{new}}=\bar\xi_{k+1}$ is
subgaussian with variance proxy $\sigma^2/(k+1)$. Hence
\[
  \Prob\bigl(|\epb_k^{\mathrm{new}}|>\tau_k\bigr)
  \le
  2\exp\left(-\frac{(k+1)\tau_k^2}{2\sigma^2}\right)
  \le 2(k+1)^{-c^2/2}.
\]
Because $c>2\sqrt{2}$, this bound is summable in $k$, so the first
Borel--Cantelli lemma \mbox{\cite[p.~188]{feller1957}} gives
$|\epb_k^{\mathrm{new}}|\le\tau_k$ a.s.\ for all sufficiently large
$k$. Since $\tau_k\to0$, this bound and
\eqref{eq:comparison-final} imply
$\epb_k^{\mathrm{new}}\to0$ and $D_k\to0$. Therefore,
\[
  \epb_k^{\mathrm{cur}}
    =\epb_k^{\mathrm{new}}-D_k\to0
  \quad\text{a.s.}
\]

\subsubsection*{Part V: Eventual monotonicity}
If the acceptance test holds, then the identity established in Part~I
gives $\Delta_k+D_k<-\tau_k$ and
$\thh_{k+1}=\btheta_{\text{new}}(k+1)$.
For $k \ge J_0(\omega)$, Part~IV gives
$|D_k| \le \tau_k$, hence
\[
  \Delta_k < -\tau_k-D_k \le 0
  \quad \text{a.s.}
\]
Along iterations on which a candidate is rejected,
$\thh_{k+1} = \thh_k$ and\\
$L(\thh_{k+1}) = L(\thh_k)$ trivially.
Therefore, on the event of probability one defining $J_0(\omega)$,
$L(\thh_{k+1})\le L(\thh_k)$ for every $k\ge J_0(\omega)$.

\subsubsection*{Part VI: Infinitely many candidates near the optimum}
Fix $\eta > 0$ and define
\[
  S_\eta := \{\btheta \in \Theta : L(\btheta) < L(\btheta^*) + \eta\},
\]
and analogously $S_{\eta/2}$ with $\eta$ replaced by $\eta/2$. Note
$S_{\eta/2} \subset S_\eta$. By the reachability assumption,
\[
  \Prob\bigl(\btheta_{\text{new}}(k+1) \in S_{\eta/2}\bigr)
    \ge \delta(\eta/2) > 0
  \quad \text{for every } k \ge k_0.
\]
Define the independent events
$G_k := \{\btheta_{\text{new}}(k+1) \in S_{\eta/2}\}$.\\ Then
$\sum_{k} \Prob(G_k) = \infty$, and the second Borel--Cantelli lemma
\mbox{\cite[p.~188]{feller1957}} gives
\begin{equation}
  \Prob(G_k \text{ infinitely often}) = 1,
  \label{eq:bc}
\end{equation}
i.e., candidates in $S_{\eta/2}$ are generated infinitely many times
almost surely.

\subsubsection*{Part VII: Eventual acceptance}
Since $\tau_k\to0$, Part~IV gives, almost surely,
for all sufficiently large $k$ (and hence beyond $J_0(\omega)$),
\begin{equation}
  |D_k| \le \eta/4
  \quad \text{and} \quad
  \tau_k < \eta/4.
  \label{eq:k1}
\end{equation}

{\widowpenalty=10000
On the event $\{\thh_k \notin S_\eta\} \cap G_k$, the definitions give
$L(\thh_k) \ge L(\btheta^*) + \eta$ and
$L(\btheta_{\text{new}}(k+1)) < L(\btheta^*) + \eta/2$, so
$\Delta_k < -\eta/2$. Combining with \eqref{eq:k1},\par}
\begin{multline*}
  \ybar(\btheta_{\text{new}}(k+1)) - \ybar(\thh_k)
    = \Delta_k + D_k \\
    < -\eta/2 + \eta/4
    = -\eta/4
    < -\tau_k.
\end{multline*}
The acceptance test is satisfied, hence
$\thh_{k+1} = \btheta_{\text{new}}(k+1) \in S_{\eta/2} \subset S_\eta$.

By \eqref{eq:bc}, almost every sample path has infinitely many indices
$k$ at which $G_k$ occurs and \eqref{eq:k1} holds. If
$\thh_k \notin S_\eta$ at any such index, the argument above gives
$\thh_{k+1} \in S_\eta$. Once
$\thh_k \in S_\eta$, Part~V's monotonicity ensures every subsequent
iterate also lies in $S_\eta$. Therefore
\[
  \Prob\bigl(\thh_k \in S_\eta \text{ for all sufficiently large }
  k\bigr) = 1.
\]

\subsubsection*{Part VIII: Convergence}
{\widowpenalty=10000
Applying Part~VII to $\eta=1/m$, $m\in\mathbb{N}$, and intersecting the
resulting events of probability one, the sample path argument based on
monotonicity used in the proof of \cite[Thm.~2.1]{spall2003} gives
$L(\thh_k)\to L(\btheta^*)$ a.s.
Assumption~\ref{ass:separation} then gives
$\thh_k\to\btheta^*$ a.s. Finally,
$\ybar(\thh_k)=L(\thh_k)+\epb_k^{\mathrm{cur}}$ and Part~IV give
$\ybar(\thh_k)\to L(\btheta^*)$ a.s.\par}
\end{proof}

\section{Numerical Experiments}
\label{sec:numerics}

\subsection{Experimental Setup}

The experiments examine three questions at finite budgets: whether the
proposed method continues to improve as the measurement budget grows,
what is gained by reusing measurements at the current estimate, and what
happens when measurement noise is ignored.

We use two test problems adapted from Spall's Examples~2.1 and~6.6
\cite{spall2003}. The first is the noisy quadratic problem
$L(\btheta)=\btheta^\top\btheta$, with $p=2$, $\Theta=[1,3]^2$,
$\thh_0=(2,2)^\top$, $\btheta^*=(1,1)^\top$, and $\sigma=2$. The second
sets $p=5$ in the skewed quartic loss:
\[
 L(\btheta)=\sum_{i=1}^{p}\left[(\Bmat\btheta)_i^2
 +0.1(\Bmat\btheta)_i^3+0.01(\Bmat\btheta)_i^4\right],
\]
where $\Bmat$ is the upper triangular matrix of ones,
$\Theta=[-5,5]^5$, $\thh_0=(1,\ldots,1)^\top$, $\btheta^*=\mathbf{0}$, and
$\sigma=3$. In both problems, measurements have the form
$y(\btheta)=L(\btheta)+\epsilon$, where all measurement errors are
i.i.d.\ $N(0,\sigma^2)$. Within each search, candidate points are
i.i.d.\ uniform on $\Theta$.

We compare three approaches. The proposed method follows
Section~\ref{sec:algorithm} with
$k_0=1$ and $\tau_k=3\sigma\sqrt{\log(k+1)/k}$. The remeasurement
benchmark computes the averaged loss value at the candidate point in the same
way and uses the same threshold, but replaces the pooled average at the
current estimate by an average of $k$ new measurements
at each comparison; neither average is reused in the next comparison. The
naive method stores one noisy loss measurement at the current estimate,
obtains one noisy loss measurement at each candidate point, and accepts a
candidate only when its measured loss is lower than the stored value; no
positive threshold is used. Each noisy loss measurement counts once
toward the measurement budget. The rule using a single measurement intentionally
reproduces the original baseline based on blind random search without noise control,
providing a direct contrast with averaging and thresholding.

\FloatBarrier
\begin{figure}[!htbp]
\centering
\includegraphics[width=0.94\linewidth]
  {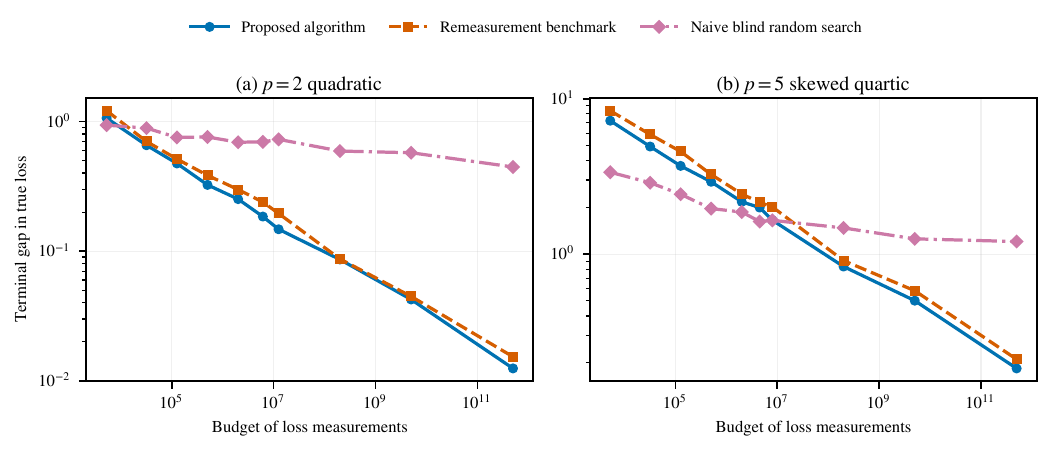}
\caption{Terminal gap in the true loss versus total measurement budget,
shown on logarithmic axes. Each marker is the median over $200$
independent runs for a selected value of the proposed method's comparison
index $K$; no individual run is selected for display. Connecting lines join
these medians across the reported values of $K$ and are not fitted rates.
Budgets are the median realized measurement counts of the proposed algorithm;
within each run, the comparison methods are stopped under the same realized
budget cap.}
\label{fig:budget-comparison}
\end{figure}

For each replication and each reported comparison index $K$ of the proposed method,
let $N_K$ denote the realized total number of noisy loss measurements consumed by
the proposed method through comparison $K$. This count includes the
$k_0$ initialization measurements, every measurement at a candidate point,
and one additional measurement at the current estimate after each rejection. In
the same replication, the remeasurement benchmark is run under that
realized budget and completes the largest number of full comparisons
whose cumulative measurement count does not exceed $N_K$; any unused
remainder is smaller than the cost of its next comparison. The naive
method uses exactly $N_K$ noisy loss measurements, including its
initialization measurement. Thus, within each replication, the methods
are matched by total measurement budget, not by the number of comparisons
or candidate points.

For each problem, we perform $200$ independent Monte Carlo replications.
For every reported comparison index $K$ of the proposed method, each replication
supplies a realized budget $N_K$ and a terminal gap for every method under that
matched budget. In Fig.~\ref{fig:budget-comparison}, the horizontal
coordinate is the median of the $200$ realized budgets, and the vertical
coordinate for each method is the median of its $200$ terminal gaps.
Thus the figure reports medians, not averages, and no individual
replication is selected. All search decisions use only noisy loss
measurements; the true loss is evaluated only to report the terminal gap
$L(\thh)-L(\btheta^*)$.

The Monte Carlo runs use NumPy 2.3.5 with the PCG64 generator and master
seed 20260727. Within each replication, whenever both the proposed method and
the remeasurement benchmark perform comparison $k$, they use the same candidate
point. Measurements at the current estimate for these methods and all random
variables for the naive method are generated from separate streams. Whenever
$n$ new independent Gaussian loss measurements are required at a point, we draw
a new $Z\sim N(0,1)$ and generate their average directly as
$L(\btheta)+\sigma Z/\sqrt{n}$. This random variable has exactly the distribution
of the average of $n$ independent $N(L(\btheta),\sigma^2)$ measurements. The $Z$
variates used to generate different averages are independent, except that when
both methods perform comparison $k$, the same $Z$ variate is used to generate
the averaged loss value at the shared candidate point. When the proposed method
accepts a candidate, the averaged loss value at the candidate point becomes the
pooled mean at the current estimate and is subsequently reused as specified by
the algorithm.
After a rejection, the additional measurement at the current estimate uses a
new independent error. Although the $n$ individual measurements represented by
an average are not generated explicitly, all $n$ are charged to $N_K$.

For the quadratic problem, the reported $K$ are 100, 250, 500, 1000,
2000, 3500, and 5000; for the quartic problem, they are 100, 250, 500,
1000, 2000, 3000, and 4000. Both continue at $K=2\times10^4$, $10^5$,
and $10^6$. With $k_0=1$, the proposed method uses $k+1$
measurements at a candidate point at comparison $k$, in addition to its one
initialization measurement and one measurement at the current estimate after each
rejection. Therefore,
\[
  N_K=1+\sum_{k=1}^{K}(k+1)+R_K
     =1+\frac{K(K+3)}{2}+R_K,
\]
where $R_K$ is the number of rejections among the first $K$
comparisons. For example, $N_{100}=5151+R_{100}$, so
$5151\le N_{100}\le5251$, consistent with the first plotted budget of
approximately $5.25\times10^3$ measurements. For the three additional
comparison indices, the proposed method and the remeasurement benchmark are
simulated directly. For the naive method, nested
running minima are generated from the corresponding distributions of
the order statistics, with the loss distribution for a single candidate
evaluated numerically.

\subsection{Results and Discussion}
\label{subsec:pooling-comparison}

Figure~\ref{fig:budget-comparison} shows the median values of the terminal
gaps. For the $p=2$ quadratic problem, the gaps for the proposed method and
the remeasurement benchmark decrease at similar rates. Both become much smaller
than the gap for naive blind random search as the budget grows. The two controlled methods share
averaging and thresholding, which explains their much faster decrease.
Their performance remains close, but the proposed algorithm generally
performs better. At the largest budget, the median gaps are $0.0125$ for
the proposed algorithm, $0.0154$ for the remeasurement benchmark, and $0.446$
for naive blind random search.

\Needspace{5\baselineskip}
For the quartic problem with $p=5$, naive blind random search initially
performs better at small budgets because it evaluates each candidate only
once and can therefore explore more broadly. The smallest
reported budget is about $5.25\times 10^3$ measurements.
At this budget, the proposed method, the remeasurement benchmark, and the
naive method have tested $100$, $71$, and about $5{,}250$ candidate points,
respectively.
The more candidates are tested, the more likely the smallest measured
loss is to be driven by an unusually negative noise realization rather
than a correspondingly small true loss. The naive method keeps this
misleadingly low value, which later candidates rarely beat; it therefore
plateaus while both controlled methods continue to improve and overtake
it. The gaps for the proposed method and the remeasurement benchmark again
remain close, with the proposed algorithm generally attaining the lower
gap. At the largest budget, their median gaps are $0.184$ and $0.210$,
compared with $1.20$ for naive blind random search. Relative to the initial
gaps ($6$ for the quadratic problem, $87.3$ for the quartic), the
proposed algorithm reduces the median terminal gap in the true loss by
$99.79\%$ in both problems. The remeasurement benchmark gives comparable reductions of
$99.74\%$ and $99.76\%$, whereas naive blind random search gives $92.57\%$
and $98.63\%$, respectively.

Across the $400$ runs at the largest tested budget, none of the $2801$
accepted updates increased the true loss. This empirical result is
consistent with the eventual monotonicity established in the convergence
analysis, but it is not a convergence proof or rate estimate.

\FloatBarrier
\section{Conclusions}

{\widowpenalty=10000
This paper makes two contributions to blind random search with noisy loss
measurements. First, it combines increasing averaging with a vanishing
positive threshold in the decision criterion. We prove that the
resulting search converges almost surely to the global minimizer.
Their combination ensures that the true loss is eventually nonincreasing
almost surely, despite the measurement noise.\par}

Second, the algorithm compares an averaged loss value at the candidate point,
based on new measurements, with a pooled average at the current estimate,
rather than
forming a new average at the current estimate for every comparison. Pooling
reduces repeated measurements at the current estimate, but it also creates
selection bias. The proof resolves this dependence by obtaining a uniform
bound over all possible current noise averages before the acceptance history
selects the realized one.

Under matched total measurement budgets, the numerical experiments show
that pooled reuse performs similarly to the remeasurement benchmark and
generally somewhat better than it. These results demonstrate the advantage of
pooling and motivate the treatment of its associated selection bias in the
convergence analysis.

Future work will investigate more efficient schedules for the running
average and threshold. It will also examine whether pooled comparisons
and the associated control of selection bias can be extended to localized
random search, simulated annealing, genetic algorithms, and other
stochastic search methods.

\end{document}